\title{\Large Construction and characterization  of  graphs whose each spanning tree has a perfect matching\thanks{This research was supported by
NSFC (Grant No. 11571294 and No. 11371180)}}
\author{ {Baoyindureng Wu \footnote{Email: baoywu@163.com (B. Wu)} }\\
\small  College of Mathematics and System Sciences, Xinjiang
University, \\ \small  Urumqi, Xinjiang 830046, P.R.China \\
{Heping Zhang \footnote{
Email: zhanghp@lzu.edu.cn  (H. Zhang) }}\\
\small  School of Mathematics and Statistics, Lanzhou University,\\
\small Lanzhou, Gansu 730000, PR China}
\date{}
\newtheorem{theorem}{Theorem}[section]
\newtheorem{lemma}[theorem]{Lemma}
\newtheorem{corollary}[theorem]{Corollary}
\begin{document}
\maketitle

\begin{abstract} An edge subset $S$ of a connected graph $G$ is called an
anti-Kekul\'{e} set  if $G-S$ is connected and has no perfect
matching. We can see that a connected graph $G$ has no anti-Kekul\'{e} set if and only if each spanning tree of $G$
has a perfect matching. In this paper, by applying Tutte's 1-factor theorem and structure of minimally 2-connected graphs, we characterize all graphs whose each spanning tree has a perfect matching
In addition, we show that if $G$ is a connected graph of order $2n$ for a positive integer
$n\geq 4$ and size $m$ whose each spanning tree has a perfect
matching, then $m\leq \frac{(n+1)n} 2$, with equality if and only if $G\cong K_n\circ K_1$.\\

{\bfseries Keywords}: Perfect matching; Anti-Kekul\'{e} set;
Spanning tree; Minimally 2-connected graph

\end{abstract}

\section{Introduction}
All graphs considered in this paper are finite and simple. We refer to \cite{Bondy2}
for  undefined notation and
terminology. For a graph $G=(V(G), E(G))$,
we denote the {\em order} and the {\em size} of $G$, respectively, by $v(G)$ and $e(G)$.
For a vertex $v\in V(G)$, the {\em degree} of $v$, denoted by $d_G(v)$,
is the number of edges incident with $v$ in $G$; the {\em neighborhood} of $v$, denoted by
$N_G(v)$, is the set of vertices adjacent to $v$ in $G$.
As usual, the complete graph, the path and the cycle of
order $n\geq 1$ are denoted by $K_n$, $P_n$ and $C_n$, respectively.
A {\it matching} in a graph $G$ is a set of pairwise nonadjacent
edges. If $M$ is a matching, the two ends of each edge of $M$ are
said to be {\em matched} under $M$, and each vertex incident with an
edge of $M$ is said to be {\em covered} by $M$. A {\em perfect
matching} (or {\em Kekul\'{e} structure} in chemistry) of a graph $G$ is a matching  which
covers every vertex of $G$. An edge of G is a {\em fixed
double (single)} edge if it belongs to all (none) of the perfect
matchings of $G$. Both fixed double edges and fixed single edges are
called {\em fixed} edges. A bipartite graph with a perfect matching
is called {\em normal (or elementary)} if it is connected and has no
fixed edges.

Let $G$ be a connected graph. An edge subset $S$ of $G$ is called an
{\em anti-Kekul\'{e} set} of $G$ if $G-S$ is connected and has no perfect
matching. The cardinality of a minimum anti-Kekul\'{e} set of $G$ is called
the anti-Kekul\'{e} number and is denoted by $ak(G)$. The
notion of anti-Kekul\'{e} set and anti-Kekul\'{e} number were first
introduced by Vuki\v{c}evi\'{c} and Trinajsti\'{c} \cite{Vuki} in
2007.

 Vuki\v{c}evi\'{c} and
Trinajsti\'{c} \cite{Vuki, Vukic} showed that the anti-Kekul\'{e}
number of benzenoid parallelograms is 2 and the anti-Kekul\'{e}
number of cata-condensed hexagonal systems equals either 2 or 3. Cai
and Zhang \cite{Cai} showed that for a hexagonal system $H$ with
more than one hexagon, $ak(H) = 0$ if and only if $H$ has no perfect
matching, $ak(H) = 1$ if and only if $H$ has a fixed double edge,
and $ak(H)$ is either 2 or 3 for the other cases. Further by
applying perfect path systems they gave a characterization whether
$ak(H) = 2$ or 3, and present an $O(n^2)$ algorithm for finding a
smallest anti-Kekul\'{e} set in a normal  hexagonal system, where
$n$ is the number of its vertices.

Vuki\v{c}evi\'{c} \cite{Vuk} showed that the anti-Kekul\'{e} number
of the icosahedron fullerene C$_{60}$ (buckminsterfullerene) is 4.
Kutnar et al. \cite{Kutnar}
proved that the anti-Kekul\'{e} number of all fullerenes is either 3
or 4 and that for each leapfrog fullerene the Anti-Kekul\'{e} number
can be established by observing finite number of cases not depending
on the size of the fullerene. Yang et
al. \cite{Yang} showed that the anti-Kekul\'{e} number is always
equal to 4 for all fullerene graphs.

Veljan and Vuki\v{c}evi\'{c} \cite{Veljan} found that the values of
the anti-Kekul\'{e} numbers of the infinite triangular, rectangular
and hexagonal grids are, respectively, 9, 6, 4. Among other things,
it was shown that the anti-Kekul\'{e} number of cata-condensed
phenylenes is 3 in \cite{Zhang}. Ye \cite{Ye} showed that, if $G$ is
a cyclically $(r+1)$-edge-connected $r$-regular graph $(r\geq 3)$ of
even order, then either the anti-Kekul\'{e} number of $G$ is at
least $r+1$, or $G$ is not bipartite, and the smallest odd cycle
transversal of $G$ has at most $r$ edges. L\"{u} et al. \cite{Lu} showed that computing the anti-kekul\'{e} number
of bipartite graphs is NP-complete.

In spite of the above known results on anti-Kekul\'{e} number of a
graph, a fundamental problem is not yet solved: which graphs do not
have an anti-Kekul\'{e} set~? Indeed, there exist some connected
graphs, for instance, $K_2$ and all even cycles, which do not have a
anti-Kekul\'{e} set. The aim of this note is to characterize all
these graphs. Our approach is to construct all such graphs from
$K_2$ and all even cycles. To this end, let us define recursively a
family $\mathcal{G}$ of graphs.

(1) $K_2$ and all even cycles belong to $\mathcal{G}$;

$(2)$ Assume that $H$ is a connected graph of order $p\geq 2$ with
vertex set $\{u_1, \ldots, u_p\}$ and $F_i\in \mathcal{G}$ for each
$i\in\{1, \ldots, p\}$, where $H, F_1, \ldots, F_p$ are pairwise
vertex-disjoint. For each $i$, take a vertex $v_i\in V(F_i)$. The
graph obtained from $H, F_1, \ldots, F_p$ by identifying the
vertices $u_i$ and $v_i$ for each $i$, denoted by $H[F_1(u_1v_1),
\ldots, F_p(u_pv_p)]$ (or simply by $H[F_1, \ldots, F_p]$), belongs
to $\mathcal{G}$.

The {\em corona} $G\circ K_1$ of a graph $G$ is
the graph obtained from $G$ by adding an edge between each vertex of $G$ and its  copy. Observe that $G\circ K_1\in \mathcal{G}$ for any connected graph $G$. The join of two vertex-disjoint graphs $G$ and $H$, denoted by $G\vee H$, is the graph obtained from $G\cup H$ by joining each vertex of $G$ to all vertices of $H$.

We present our main theorem as follows.

\begin{theorem} Let $G$ be a connected graph. The following
statements are equivalent:\\
(1) $G$ has no anti-Kekul\'{e} set,\\
(2) Each connected spanning subgraph of $G$ has a perfect matching,\\
(3) Each spanning tree of $G$ has a perfect matching,and\\
(4) $G\in \mathcal{G}$.
\end{theorem}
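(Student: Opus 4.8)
The plan is to establish the cycle of implications $(1)\Rightarrow(2)\Rightarrow(3)\Rightarrow(4)\Rightarrow(1)$, or some convenient rearrangement thereof. The easy implications are $(2)\Rightarrow(3)$, which is immediate since a spanning tree is a connected spanning subgraph, and $(1)\Leftrightarrow(3)$, which is essentially the observation already flagged in the abstract: if some spanning tree $T$ has no perfect matching, then the set $S=E(G)\setminus E(T)$ is an anti-Kekul\'e set (as $G-S=T$ is connected and unmatchable), and conversely if $S$ is an anti-Kekul\'e set then $G-S$ is connected with no perfect matching, and any spanning tree of $G-S$ is a spanning tree of $G$ with no perfect matching (since a tree has at most one perfect matching, and a supergraph-within-the-tree argument shows a spanning tree of a graph with no perfect matching can itself have no perfect matching — more carefully, one picks a spanning tree of $G-S$ and notes it inherits non-matchability only after a short argument, so I would instead directly use that $G-S$ connected and unmatchable forces, by Tutte, an odd component structure that some spanning tree witnesses). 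For $(1)\Rightarrow(2)$: if some connected spanning subgraph $H$ has no perfect matching, then $E(G)\setminus E(H)$ is an anti-Kekul\'e set. So the real content is the equivalence of these conditions with membership in $\mathcal{G}$, i.e. $(4)\Rightarrow(3)$ (or $(4)\Rightarrow(1)$) and $(3)\Rightarrow(4)$ (or $(2)\Rightarrow(4)$).

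For $(4)\Rightarrow(3)$ I would induct on the recursive construction of $\mathcal{G}$. The base cases $K_2$ and even cycles clearly have the property that every spanning tree has a perfect matching (a spanning tree of $C_{2k}$ is a path $P_{2k}$, which has a perfect matching). For the inductive step, suppose $G=H[F_1(u_1v_1),\ldots,F_p(u_pv_p)]$ with each $F_i\in\mathcal{G}$ having the property. Let $T$ be a spanning tree of $G$. The key structural fact is that each identified vertex $w_i$ (the merge of $u_i$ and $v_i$) is a cut vertex separating $V(F_i)\setminus\{w_i\}$ from the rest, so $T$ restricted to $F_i$ is a spanning tree $T_i$ of $F_i$, and $T$ restricted to $H$ is a spanning tree $T_H$ of $H$ (here we use that the $F_i$ attach only at the single vertices $w_i$). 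By induction each $T_i$ has a perfect matching $M_i$; the union $\bigcup_i M_i$ already covers every vertex of $G$ except... actually it covers all vertices including each $w_i$, and since $V(G)=\bigcup_i V(F_i)$ with the $V(F_i)$ meeting only in shared merge vertices in a tree-like pattern governed by $H$, a short case analysis (or a second induction on $p$) shows $\bigcup_i M_i$ is a perfect matching of $T$. I would need to be careful about the combinatorics of how the $w_i$ are shared, but morally the property is closed under the gluing operation because a perfect matching is a purely local object and the glue points are cut vertices.

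The hard direction, and the main obstacle, is $(3)\Rightarrow(4)$: every graph whose spanning trees all have perfect matchings lies in $\mathcal{G}$. Here I would argue by induction on $v(G)+e(G)$. First dispose of the case where $G$ has a cut vertex or is not 2-connected: if $v$ is a cut vertex, the blocks (or the pieces hanging off $v$) should themselves inherit property (3) — one must check that a spanning tree of a piece, extended arbitrarily to a spanning tree of $G$, forces a perfect matching of the piece, which needs the piece to have even order or needs the matching at $v$ to be handled consistently; this is the delicate bookkeeping. Applying the induction hypothesis to the pieces exhibits $G$ as a $\mathcal{G}$-gluing $H[F_1,\ldots,F_p]$. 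So we reduce to $G$ 2-connected. Now if $G$ is a cycle, it must be even (an odd cycle has a path spanning tree with no perfect matching), done. Otherwise $G$ is 2-connected and not a cycle, so $G$ properly contains a minimally 2-connected spanning subgraph $G'$ (delete edges while staying 2-connected); I invoke the structure theory of minimally 2-connected graphs (Dirac, Plummer) — such graphs have at least two vertices of degree $2$, no triangles unless $G'=K_3$, etc. The strategy is to use this structure to locate a spanning tree of $G'$ (hence of $G$) that fails to have a perfect matching, contradicting (3), UNLESS $G$ is one of the allowed even cycles — which would show the 2-connected non-cycle case is vacuous, i.e. every graph in the class is obtained by gluing and all 2-connected members are even cycles. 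Making this last step precise — extracting from the minimally-2-connected structure an explicit "bad" spanning tree — is where the real work lies, and I expect to lean on the fact that a long induced path of degree-2 vertices, or two such paths of the same parity, can be routed through a spanning tree to create an odd component after deleting the appropriate matched edge, invoking Tutte's $1$-factor theorem to certify non-matchability.
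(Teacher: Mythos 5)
Your overall architecture is the same as the paper's (easy equivalences, induction over the construction for $(4)\Rightarrow(3)$, cut-vertex reduction plus minimally $2$-connected spanning subgraphs for $(3)\Rightarrow(4)$), and the easy parts are fine, indeed simpler than you make them: if $S$ is an anti-Kekul\'e set and $T$ is a spanning tree of $G-S$, any perfect matching of $T$ would already be a perfect matching of $G-S$, so no Tutte detour is needed; and in $(4)\Rightarrow(3)$ the sets $V(F_i)$ partition $V(G)$ after the identifications (each $F_i$ meets the rest only in the single merged vertex $u_i=v_i$, and distinct $F_i$ are disjoint), so $\bigcup_i M_i$ is immediately a perfect matching of $T$. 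The two steps you defer, however, are exactly where the content lies, and one of them would fail as stated. In the $2$-connected non-cycle case your plan is to locate a bad spanning tree \emph{inside} a minimally $2$-connected spanning subgraph $G'$. This breaks down when $G$ is Hamiltonian: a Hamilton cycle is itself minimally $2$-connected, so $G'$ may be an even cycle, every spanning tree of which is a path with a perfect matching; a bad tree must then use a chord, i.e.\ an edge outside $G'$. The paper's Lemma 2.3 accordingly splits into the Hamiltonian case, where $C-v_{k-2}v_{k-1}-v_{k+1}v_{k+2}+v_1v_k$ is a spanning tree with two leaves sharing the neighbour $v_k$ (hence unmatchable), and the non-Hamiltonian case, where $G'$ is guaranteed not to be a cycle and Bollob\'as's theorem is used concretely: a vertex $v$ of the forest $F=G'-V_2$ with $d_F(v)\le 1$ has two degree-two neighbours $u,w$, one checks $G'-u-w$ is connected, and one builds a spanning tree in which $u$ and $w$ are both leaves at $v$. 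Your ``route a degree-$2$ path to create an odd component'' is a hope, not a construction; the two-leaves-one-neighbour trick is the missing idea.

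The separable case also hides more than ``delicate bookkeeping''. First, the two sides of a separation at a cut vertex $v$ have orders summing to $v(G)+1$, so one side has odd order and cannot itself satisfy (3) or lie in $\mathcal{G}$; the paper repairs this by attaching a pendant vertex $v_2$ to $v$ on the odd side and proving that both the augmented odd side $G_1'$ and the even side $G_2$ inherit (3) (using that $v$ must be matched into the even side). Second, even granting that both pieces lie in $\mathcal{G}$, ``glue two members of $\mathcal{G}$ at a vertex'' is not one of the defining operations of $\mathcal{G}$ -- identifying a vertex of two even-order graphs gives odd order -- so $G$ is not literally of the form $H[F_1,\ldots,F_p]$ with respect to the natural pieces; the paper needs Lemma 2.4 (closure of $\mathcal{G}$ under deleting a pendant leaf and attaching a member of $\mathcal{G}$ at its support vertex), proved by its own induction, to conclude $G\in\mathcal{G}$. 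Third, when every separation has its even side of order $2$, the pendant-vertex reduction returns a graph isomorphic to $G$ and the induction makes no progress; the paper treats this degenerate case separately, showing via a Tutte-based claim that $G\cong H\circ K_1$ for a nonseparable $H$. Your proposal acknowledges none of these three points, so as written the hard direction $(3)\Rightarrow(4)$ has genuine gaps in both the separable and the $2$-connected branches.
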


\section{Preliminary}
We start with Tutte's 1-factor theorem.
\begin{theorem} (Tutte \cite{Tutte})\label{tutter} A graph $G$ has a perfect matching if and only if $c_o(G-S) \leq |S|$
for any $S\subseteq V(G)$, where
$c_o(G-S)$ is the number of odd components of $G-S$.
\end{theorem}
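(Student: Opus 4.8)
The plan is to prove both implications of the equivalence, treating necessity as routine and sufficiency as the substantive direction. For necessity, I would assume $G$ has a perfect matching $M$ and fix an arbitrary $S \subseteq V(G)$. Each odd component $C$ of $G - S$ has odd order, so $M$ cannot match all of $C$ within $C$; hence at least one vertex of $C$ is matched by $M$ to a vertex of $S$. Distinct odd components use distinct $S$-endpoints because $M$ is a matching, so the number of odd components is at most $|S|$, giving $c_o(G-S) \le |S|$.

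For sufficiency I would argue by contradiction via an edge-maximal configuration. First, taking $S = \emptyset$ forces $c_o(G) = 0$, so $|V(G)|$ is even. Now suppose $G$ satisfies Tutte's condition yet has no perfect matching. Observe that adding an edge never increases $c_o(G-S)$ for any $S$, since merging two components changes the odd-component count by $0$ or $-2$; thus Tutte's condition is preserved under edge additions. Hence I may pass to an edge-maximal spanning supergraph $G^*$ of $G$ on the same vertex set that still satisfies Tutte's condition but has no perfect matching, so that adding any missing edge to $G^*$ creates a perfect matching. Let $U$ be the set of vertices of $G^*$ adjacent to all others. The heart of the proof is the claim that $G^* - U$ is a disjoint union of complete graphs. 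Granting the claim, I would finish as follows: within each component of $G^* - U$ match vertices in pairs, leaving exactly one unmatched vertex in each odd component; Tutte's condition applied to $S = U$ bounds the number of leftover vertices by $|U|$, and I match each leftover vertex to a distinct vertex of $U$ (possible since $U$-vertices are universal). A parity count using that $|V(G^*)|$ is even shows the remaining vertices of $U$ are even in number, so they can be matched inside the clique $U$, producing a perfect matching of $G^*$ and contradicting its choice.

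It remains to prove the claim, which I expect to be the main obstacle. Supposing $G^* - U$ is not a disjoint union of cliques, I would locate vertices $x, y, z \notin U$ with $xy, yz \in E(G^*)$ but $xz \notin E(G^*)$, together with a vertex $w$ satisfying $yw \notin E(G^*)$ (which exists because $y \notin U$). By maximality, $G^* + xz$ and $G^* + yw$ have perfect matchings $M_1$ and $M_2$ with $xz \in M_1$ and $yw \in M_2$. The symmetric difference $M_1 \triangle M_2$ is a disjoint union of even cycles alternating between the two matchings, and both $xz$ and $yw$ lie in it. If they lie in different cycles, then swapping along the cycle containing $yw$ yields a perfect matching avoiding both forbidden edges, hence a perfect matching of $G^*$ — a contradiction.

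The delicate case is when $xz$ and $yw$ lie in a common cycle $C$; here I would reroute $C$ using the edges $yx$ and $yz$, both present in $G^*$, as bridges. Concretely, I match $y$ to whichever of $x, z$ it first reaches along $C$, take alternate $M_1$-edges on one resulting arc and alternate $M_2$-edges on the other, and thereby cover all vertices of $C$ without using either $xz$ or $yw$. Combining this with the common edges of $M_1$ and $M_2$ lying off $C$ again gives a perfect matching of $G^*$, the final contradiction. The one point demanding care is verifying that the rerouted cycle is matched consistently and stays within $G^*$, and a clean parity bookkeeping of the arcs around $C$ is exactly what makes this construction go through.
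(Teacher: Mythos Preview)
The paper does not prove this statement at all: Theorem~2.1 is simply Tutte's 1-factor theorem quoted with a citation to \cite{Tutte}, used as a black-box tool in the proofs of Lemma~2.3 and the Claim inside Case~2 of Theorem~1.1. There is therefore no ``paper's own proof'' to compare your proposal against.

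That said, your outline is the standard Lov\'asz proof via an edge-maximal counterexample, and it is correct. One small imprecision: in the final assembly you write ``combining this with the common edges of $M_1$ and $M_2$ lying off $C$,'' but off $C$ the two matchings need not agree---they may differ on other cycles of $M_1 \triangle M_2$. This is harmless, since on those other cycles neither $xz$ nor $yw$ appears and you may simply take the $M_1$-edges (all of which lie in $G^*$); just phrase it that way. The rerouting in the same-cycle case is exactly the textbook argument and your description of it is accurate.
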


For an integer $k\geq 1$, a $k$-connected graph $G$ is called {\em
minimally $k$-connected} if $G-e$ is not $k$-connected for each
$e\in E(G)$. The following property for a minimally 2-connected graph can be found
in \cite{Bollobas}.

\begin{theorem}(Bollob\'{a}s \cite{Bollobas}) Let $G$ be a minimally 2-connected graph that is not
a cycle. Let $V_2\subseteq V(G)$ be the set of vertices of degree
two. Then $F=G-V_2$ is a forest with at least two components. A
component $P$ of $G[V_2]$ is a path and the endvertices of $P$ are
not joined to the same tree of the forest $F$.
\end{theorem}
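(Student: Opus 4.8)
The plan is to derive all four assertions from two structural lemmas about a minimally $2$-connected graph $G$, each proved by exploiting that every edge is critical for $2$-connectivity. Throughout I use the following observation. Since $G$ is $2$-connected it is $2$-edge-connected, so $G-e$ is connected for every edge $e=uv$; as $G-e$ is not $2$-connected it has a cut vertex $w$. If $w\in\{u,v\}$ then $G-e-w=G-w$ is connected, a contradiction, so $w\notin\{u,v\}$; and since $G-w$ is connected while $G-e-w$ is not, $e$ is a bridge of $G-w$ and $u,v$ lie in different components of $G-e-w$. Thus \emph{for every edge $uv$ there is a vertex $w\neq u,v$ through which every $u$--$v$ path other than the edge $uv$ must pass.} I would record first the \emph{no-chord lemma}: no cycle of $G$ has a chord. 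Indeed, if $xy$ were a chord of a cycle $C$, then $G-xy$ is $2$-connected, contradicting minimality: for any vertex $w$ the chord splits $C$ into two $x$--$y$ arcs, at least one of which avoids $w$, so $x,y$ remain connected in $(G-w)-xy$, whence $G-xy-w$ is connected.

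The second and crucial lemma is that \emph{every cycle $C$ of $G$ contains at least two vertices of degree $2$}. Suppose not; then $C$ has two adjacent vertices $u,v$ of degree $\geq 3$, and by the no-chord lemma each of $u,v$ has a neighbour off $C$. Applying the observation to $e=uv$ gives a cut vertex $w$ lying on the path $C-e$, with $e$ a bridge of $G-w$. I would run this over all edges of $C$ joining two vertices of degree $\geq 3$ and choose the one for which the smaller side of the resulting separation of $G-w$ is minimal; the off-$C$ neighbours of $u$ and $v$ together with $2$-connectivity then either reconnect the two sides (so $w$ was not a cut vertex) or produce a shorter cycle with fewer degree-$2$ vertices, contradicting the extremal choice. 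This extremal/rerouting argument is the \textbf{main obstacle}, as one must control where the off-$C$ branches re-enter the separation.

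Granting these lemmas, the first two assertions are immediate. Any cycle of $F=G-V_2$ would be a cycle of $G$ all of whose vertices have degree $\geq 3$, contradicting the second lemma; hence $F$ is acyclic, i.e.\ a forest. Since every vertex of $V_2$ has degree $2$ in $G$, the graph $G[V_2]$ has maximum degree $\leq 2$, so each component is a path or a cycle; a cycle component would consist entirely of degree-$2$ vertices whose neighbours all lie on it, making it a connected component of $G$ and forcing $G$ to be a cycle, which is excluded. Thus each component $P$ of $G[V_2]$ is a path, and an endpoint of $P$ has its non-$P$ neighbour in $F$, since otherwise that neighbour would be a degree-$2$ vertex adjacent to $P$, hence inside $P$.

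It remains to show that the two endpoints of $P=p_1\cdots p_\ell$ attach, through their $F$-neighbours $a,b$, to different trees of $F$; the bound of at least two components then follows, because $G$ contains a cycle, so $V_2\neq\emptyset$, so some path $P$ exists and witnesses two distinct trees. First $a\neq b$: if $a=b$ then removing $a$ detaches $P$, making $a$ a cut vertex unless $G$ is a cycle. Suppose then for contradiction that $a\neq b$ lie in one tree $T$, joined there by a path $Q\subseteq T$; then $C=P\cup Q\cup\{ap_1,p_\ell b\}$ is a cycle whose only degree-$2$ vertices are those of $P$, and by the no-chord lemma it is induced. Applying the observation to the edge $e$ of $Q$ incident to $a$ yields a cut vertex on $C-e$ separating the ends of $e$; using that the endpoints of $Q$-edges have degree $\geq 3$ and therefore send branches off $C$, I would build a second internally disjoint path across this separation (equivalently, a chord of $C$), contradicting either the separation or the no-chord lemma. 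Carrying out this last rerouting rigorously — matching the branches at the degree-$\geq 3$ vertices of $Q$ to the two sides of the cut — is the delicate point, and is the same mechanism that drives the second lemma.
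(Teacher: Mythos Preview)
The paper does not prove this theorem at all: it is quoted from Bollob\'{a}s's book \cite{Bollobas} as a black box and used later in Lemma~2.3. So there is no ``paper's own proof'' to compare your attempt against; you are supplying what the paper deliberately omitted.

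On the merits of your sketch: the opening observation (for every edge $uv$ there is a $w\neq u,v$ through which all alternative $u$--$v$ paths pass) and the no-chord lemma are correct and cleanly argued; this is indeed the standard characterization of minimally $2$-connected graphs. The formal deductions you draw \emph{once the two structural lemmas are granted}---that $G-V_2$ is acyclic, that components of $G[V_2]$ are paths and not cycles, that endpoints of such a path have their outside neighbour in $F$, and that existence of at least one such path forces $F$ to have at least two components---are all fine.

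The genuine gaps are exactly the two places you flag yourself. For the ``every cycle contains at least two vertices of degree $2$'' lemma, the extremal/rerouting strategy (pick the edge of $C$ between two degree-$\geq 3$ vertices whose associated separation has smallest small side, then use the off-$C$ branches to either reconnect or shorten) is the right shape, but you have not actually executed it: one must argue carefully that the branch at $u$ (say) re-enters $C-e-w$ on the far side of $w$, and your one-sentence summary does not establish this. The same applies to the ``different trees'' step: forming the cycle $C=P\cup Q$ and invoking the observation on an edge of $Q$ is the right move, but the claim that the off-$C$ branches at the degree-$\geq 3$ vertices of $Q$ produce a chord of $C$ or bypass the cut vertex is asserted, not proved. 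Note also that your stated second lemma (``at least two degree-$2$ vertices'') would not by itself resolve the different-trees step, since those two vertices could both lie on $P$; you correctly do not try to use it that way, but this means the second argument really does need its own complete proof. As written, the proposal is a sound outline with the two substantive steps left open.
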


A {\em cut vertex} of a graph $G$ is a vertex $v$ such that
$c(G-v)>c(G)$, where $c(G)$ denotes the number of components of $G$.
A {\em decomposition} of a graph $G$ is a family $\mathcal{F}$ of
edge-disjoint subgraphs of $G$ such that
$\cup_{F\in\mathcal{F}}E(F)=E(G)$. A {\em separation} of a connected
graph is a decomposition of the graph into two nonempty connected
subgraphs of orders at least two which have just one vertex in common. This common vertex
is called a {\em separating vertex} of the graph. A cut vertex is
clearly a separating vertex. Since the graph under consideration is
simple, the two concepts,  separating vertex and cut vertex, are
identical. A graph is nonseparable if it is connected and has no
cut vertices; otherwise, it is separable. So a nonseparable graph other than $K_2$ is 2-connected.
 A {\em block} of a
graph $G$ is a subgraph which is nonseparable and is maximal with
respect to this property. Further, a block of $G$ is called an {\em end
block} if it contains just one cut vertex of $G$.

\vspace{3mm} To show our main theorem, we need the following two
lemmas.

\begin{lemma} If $G$ is a nonseparable graph whose
each spanning tree has a perfect matching, then it is isomorphic to
$K_2$ or an even cycle.
\end{lemma}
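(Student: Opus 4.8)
The plan is to prove the contrapositive: if $G$ is nonseparable, $G\ne K_2$, and $G$ is not an even cycle, then $G$ has a spanning tree with no perfect matching. First I would note that any spanning tree of $G$ carries a perfect matching, so $|V(G)|$ is even; hence $|V(G)|\ge 4$ and $G$ is $2$-connected, and it remains only to show that $G$ is a cycle (evenness being then automatic). The one place Tutte's theorem (Theorem~\ref{tutter}) enters is the following remark: if a spanning tree $T$ of $G$ contains a vertex $w$ having two neighbours of degree $1$ in $T$, then with $S=\{w\}$ the graph $T-S$ has at least the two (odd) components consisting of those leaves, so $c_o(T-S)\ge 2>|S|$ and $T$ has no perfect matching. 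Hence it suffices to produce in $G$ a vertex $w$ and two of its neighbours $x,y$ with $G-\{x,y\}$ connected: since $|V(G)|\ge 4$, any spanning tree of $G-\{x,y\}$ together with the edges $wx$ and $wy$ is then a spanning tree of $G$ in which $w$ has the two leaf-neighbours $x,y$.

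So everything reduces to the purely structural claim $(\ast)$: \emph{every $2$-connected graph on at least four vertices that is not a cycle contains a vertex $w$ with two neighbours $x,y$ such that $G-\{x,y\}$ is connected.} To prove $(\ast)$ I would first pass to a spanning minimally $2$-connected subgraph $G_0$ obtained by repeatedly deleting edges whose removal keeps the graph $2$-connected; this is harmless, because if $G_0-\{x,y\}$ is connected then so is $G-\{x,y\}$, the latter having the same vertices and more edges. If this process terminates at a cycle $C_m$, then $G$ itself is $C_m$ together with at least one chord $v_iv_j$, and $(\ast)$ holds at once with $w=v_i$, $x=v_{i-1}$, $y=v_{i+1}$: deleting $v_{i-1}$ and $v_{i+1}$ leaves $v_i$ joined through the chord $v_iv_j$ to the path into which the rest of the cycle collapses. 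Otherwise $G_0$ is minimally $2$-connected and not a cycle, and this is where the Bollob\'{a}s structure theorem above is brought in: writing $V_2$ for the set of degree-two vertices, $F=G_0-V_2$ is a forest with components $T_1,\dots,T_r$ ($r\ge 2$), each component of $G_0[V_2]$ is a path, and the two ends of such a path attach to different $T_i$'s. Contracting each $T_i$ to a vertex produces a loopless multigraph $\widehat{G_0}$ on $r$ vertices whose edges are exactly the ``suspended paths'', and $\widehat{G_0}$ is $2$-edge-connected since contraction cannot lower edge-connectivity. I would then pick a branch vertex $v\in V(F)$ and split its neighbours into those in $V_2$ (each starting a suspended path) and those in $V(F)$ (each a tree-edge of $F$), the aim being to delete two neighbours of $v$ so that the corresponding two edges of $\widehat{G_0}$ can be removed without disconnecting it. For instance, if some $T_i$ has at least two vertices one can take $v$ to be a leaf of $T_i$, observe, using that every leaf of $T_i$ already has degree $\ge 3$ in $G_0$, that $T_i$ sends out at least three suspended paths, and then exploit the $2$-edge-connectivity of $\widehat{G_0}$ — which supplies cycles through the relevant edges — to choose the two to delete. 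This contradicts the failure of $(\ast)$, so $G$ is a cycle, hence an even cycle.

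The main obstacle, and the only step that is not routine, is this last one: showing that one can always select two neighbours of a suitable branch vertex whose deletion does not form a ``tight'' $2$-edge-cut of $\widehat{G_0}$. The genuine danger is a configuration in which every pair of neighbours of every degree-$\ge 3$ vertex is a $2$-cut of $G_0$; excluding it needs the interplay between the $2$-edge-connectivity of $\widehat{G_0}$, the forest structure of $F$, and the degree constraint on the leaves of the $T_i$, and it will require a short case analysis according to how many neighbours of the chosen vertex lie in $V_2$ and to whether $\widehat{G_0}$ is itself a cycle-multigraph. By contrast, the Tutte observation, the reduction to the minimally $2$-connected case, and the cycle-plus-chord case are all straightforward.
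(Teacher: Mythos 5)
Your reduction is sound and, up to packaging, it is the paper's own strategy: the Tutte observation (a spanning tree in which some vertex has two leaf neighbours has no perfect matching), the passage to a spanning minimally $2$-connected subgraph, and the cycle-plus-chord case all match what the paper does (the paper phrases the first case as ``$G$ has a Hamilton cycle'' and deletes two cycle edges after adding the chord, which is the same construction). The problem is that the heart of the lemma --- your claim $(\ast)$ in the case where the minimally $2$-connected subgraph $G_0$ is not a cycle --- is exactly the step you leave open. You set up the Bollob\'as decomposition and the contracted multigraph $\widehat{G_0}$, but then only describe a plan (``pick a leaf of some $T_i$, use $2$-edge-connectivity of $\widehat{G_0}$, a short case analysis will be required'') and explicitly flag it as the main obstacle. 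As written, nothing in the proposal identifies which two neighbours to delete, nor rules out your own ``danger configuration''; moreover the plan conflates deleting two \emph{vertices} adjacent to $v$ (which may lie in $V(F)$ as well as in $V_2$) with deleting two \emph{edges} of $\widehat{G_0}$, so even the intended case analysis is not set up cleanly. So there is a genuine gap, and it sits precisely at the only nontrivial point of the lemma.

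For comparison, the paper closes this step with a much shorter argument that needs no contraction and no edge-connectivity of an auxiliary multigraph. Choose $v\in V(F)$ with $d_F(v)\le 1$ (a leaf or isolated vertex of the forest $F=G_0-V_2$). Since $v\notin V_2$ we have $d_{G_0}(v)\ge 3$, so $v$ has two neighbours $u,w\in V_2$; by the Bollob\'as theorem each of $u,w$ is an end of a path component of $G_0[V_2]$ whose two ends attach to different trees of $F$ (in particular $u$ and $w$ are not adjacent to each other). Now if $G_0-u-w$ were disconnected, take a component $A$ not containing $v$: since $u$ and $w$ each have degree $2$ with $v$ as one neighbour, $A$ can reach the rest of $G_0$ only through $u$ and $w$, and from $u,w$ only through $v$; since $d_{G_0}(v)\ge 3$ the side containing $v$ is not $\{v\}$ alone, so $v$ (or $u$ or $w$) would be a cut vertex of $G_0$, contradicting $2$-connectivity. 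This gives the vertex $w$ of your claim $(\ast)$ directly (namely $v$ with neighbours $u,w$), and the rest of your argument then goes through. If you want to salvage your own route through $\widehat{G_0}$, you would still have to carry out the case analysis you postponed, which is avoidable.
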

\begin{proof} By the assumption,  $G$ has even order $n$. Suppose to the contrary that
$G$ is isomorphic to neither $K_2$ nor an even cycle. Then $n\geq
3$ and $G$ is 2-connected.
We consider the following two cases.

\vspace{3mm} {\bf Case 1.} $G$ contains a Hamilton cycle $C$.

Label the vertices of $C$ as $v_1, v_2, \ldots, v_n$ in
the cyclic order. Since $G\not\cong C_n$, there is a chord for $C$.
Without loss of generality, let $v_1v_k$ be such an edge. Since
$3\leq k\leq n-1$, $T=C-v_{k-2}v_{k-1}-v_{k+1}v_{k+2}+v_1v_k$ is a spanning
tree of $G$ without a perfect matching, a contradiction.

\vspace{3mm}  {\bf Case 2.} $G$ contains no Hamilton cycle.

 Let $H$ be a minimally 2-connected spanning subgraph of
$G$. Then $H$ is not a cycle. Let $V_2$ be the set of vertices with degree 2 in $H$. By
Theorem 2.2, $H-V_2$ is a forest $F$ with at least two components.
Take a vertex $v\in V(F)$ with $0\leq d_F(v)\leq 1$. Since
$d_H(v)\geq 3$, $v$ has two neighbors, say $u$ and $w$, in $V_2$.
Again by Theorem 2.2, each of $u$ and $w$ is an endvertex of a path component in $H[V_2]$,
and such a path joins distinct components of the forest $F$. We have that $H-u-w$
is connected. Otherwise $H-v$ is disconnected, contradicting that
$H$ is 2-connected. Let $T_{uw}$ be a spanning tree of $H-u-w$, and
let $T$ be the tree obtained from $T_{uw}$ adding the vertices $u$,
$v$ and the edges $vu$ and $vw$. However, $T$ is a spanning tree of
$G$ without a perfect matching, a contradiction.
\end{proof}

\begin{lemma}
For $G\in \mathcal{G}$ and $F\in \mathcal{G}$ with $uv\in
E(G)$ and $d_G(v)=1$, let $G'$ be the graph obtained from $G-v$ and $F$
by identifying a vertex $w$ of $F$ to $u$. Then $G' \in \mathcal{G}$.
\end{lemma}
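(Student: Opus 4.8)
The plan is to argue by induction on the order $v(G)$ of the first graph. For the base case $G=K_2$: writing $V(K_2)=\{u,v\}$, the graph $G-v$ consists of the single vertex $u$, so identifying a vertex $w$ of $F$ with $u$ just produces a copy of $F$; thus $G'\cong F\in\mathcal G$.

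For the inductive step assume $v(G)\ge 3$. Because $G$ has a vertex of degree $1$ it has a cut vertex, so $G$ is neither $K_2$ nor an even cycle; hence its membership in $\mathcal G$ must come from rule $(2)$, and we may write $G=H[F_1(u_1v_1),\dots,F_p(u_pv_p)]$ with $H$ connected of order $p\ge 2$ and every $F_i\in\mathcal G$. Since $V(G)$ is the disjoint union of the $V(F_i)$ (the identifications are only between $H$ and the individual $F_i$), we get $v(G)=\sum_{i=1}^p v(F_i)\ge v(F_j)+2$ for each $j$; in particular every $F_i$ has strictly fewer vertices than $G$, which is what will let the induction hypothesis apply to a part.

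Next I would locate the pendant vertex $v$. It lies in a unique part $F_j$. It cannot be the attaching vertex $v_j$, for otherwise $d_G(v)=d_H(u_j)+d_{F_j}(v_j)\ge 1+1=2$, contradicting $d_G(v)=1$. Hence $v\in V(F_j)\setminus\{v_j\}$, so $d_{F_j}(v)=d_G(v)=1$ and the neighbour $u$ of $v$ lies in $V(F_j)$. Now apply the induction hypothesis to $F_j$ in place of $G$, keeping the same $v$, $u$, $F$, $w$: the graph $F_j'$ obtained from $F_j-v$ and $F$ by identifying $w$ with $u$ belongs to $\mathcal G$.

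Finally I would reassemble $G'$. As $v$ is a pendant vertex of $F_j$ other than $v_j$, the graph $F_j-v$ is connected and still contains $v_j$, so $G-v=H[F_1,\dots,F_{j-1},F_j-v,F_{j+1},\dots,F_p]$ with the part $F_j-v$ attached at $v_j=u_j$. Forming $G'$ means gluing $F$ onto $G-v$ by identifying $w$ with $u$, and since $u\in V(F_j-v)$ this operation is local to that single part and replaces it by $F_j'$ (which still contains $v_j$). Therefore $G'=H[F_1,\dots,F_{j-1},F_j'(u_jv_j),F_{j+1},\dots,F_p]$, and as $F_j'\in\mathcal G$ together with every other $F_i\in\mathcal G$, rule $(2)$ yields $G'\in\mathcal G$. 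I expect the only point requiring care to be exactly this last commutation step — verifying that carrying out the identification inside the whole graph $G-v$ coincides with carrying it out inside the part $F_j-v$ and then re-forming the $H[\cdot]$-composition — but this is routine bookkeeping once one observes that $u\in V(F_j-v)$ and that identifying $w$ with $u$ disturbs neither the other parts nor their attaching vertices.
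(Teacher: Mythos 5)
Your proposal is correct and follows essentially the same route as the paper's proof: induction on $v(G)$ with base case $K_2$, writing $G=H[F_1,\dots,F_p]$, applying the induction hypothesis to the part $F_j$ containing the pendant vertex, and reassembling $G'=H[F_1,\dots,F_j',\dots,F_p]$. The only differences are cosmetic — you absorb the paper's separate case $F_j\cong K_2$ into the base case of the induction and you spell out the (correct) verification that $v$ is not the attaching vertex $v_j$, a point the paper leaves implicit.
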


\begin{proof} We proceed by induction on the order $n$ of $G$. If $n=2$, then $G'\cong F$ and the
result is trivial. Now let $n\geq 4$. Then $G$ is neither $K_2$ nor an even cycle. By the definition of graph class $\cal G$, there
exists a connected graph $H$ of order $p\geq 2$ with vertex set
$\{u_1, \ldots, u_p\}$ and $F_i\in \mathcal{G}$ for each $i\in\{1,
\ldots, p\}$, where $H, F_1, \ldots, F_p$ are pairwise
vertex-disjoint such that $G=H[F_1, \ldots, F_p]$. Without loss of
generality, let $v\in V(F_1)$. If $F_1=K_2$, we are done, because
$G'=H[F,F_2, \ldots, F_p]$. So  assume that $F_1\not\cong K_2$. Let
$F_1'$ be the graph obtained from $F_1-v$ and $F$ by identifying vertex $u$ of $F_1-v$
and  vertex $w$ of $F$. Note that $F_1$ has less vertices than $G$. By the
induction hypothesis, $F_1'\in \mathcal{G}$, and thus $G'=H[F_1',F_2,
\ldots, F_p]\in \mathcal{G}$.
\end{proof}

\section{Proof of Theorem 1.1}

 By the definition of a Kekul\'e set we can see that
 $G$ has no anti-Kekul\'{e} set if and only if for each $S\subseteq
E(G)$ with  $G-S$ being connected, $G-S$ has a perfect
matching. Since $G-S$ is always a spanning subgraph of $G$, the latter can be expressed as ``each connected spanning subgraph of $G$ has a perfect matching".
 So  statements (1) and (2) are equivalent. Further, the equivalence of (2) and (3) is evident.

Next we mainly show  the equivalence of statements (3) and (4). We proceed  by induction on the
order $n$ of $G$.

We first consider $(4)\Rightarrow (3)$. If $G\cong
K_2$ or $G$ is an even cycle, then each spanning tree of $G$ is
isomorphic to $P_n$, and thus has a perfect matching. By the
definition of $\cal G$ we  assume that $G\cong H[F_1, \ldots, F_p]$, where
$H$ is a connected graph of order $p\geq 2$ and $F_i\in
\mathcal{G}$, $i=1,2,\ldots,p$. For any spanning tree $T$ of $G$, let $T_i=T\cup F_i$
for $i=1, 2, \ldots, p$. Then $T_i$ is a spanning tree of $F_i$.
Since $F_i\in \mathcal{G}$, by the induction hypothesis, $T_i$ has a
perfect matching $M_i$. So, $M=\cup_{i=1}^p M_i$ is a perfect
matching of $T$. This shows $(4)\Rightarrow (3)$.

To show $(3)\Rightarrow (4)$, we assume that each spanning tree of
$G$ has a perfect matching. If $G$ is nonseparable, then by Lemma
2.3, $G$ is an even cycle or $K_2$, and thus $G\in \mathcal{G}$. So in the following
we always assume that $G$ is separable. 
We consider two cases.

\vspace{3mm}
{\bf Case 1.} There exists a separation $\{G_1, G_2\}$ of $G$ with
$n_2\geq 4$.

Let $v$ be the common vertex of $G_1$ and $G_2$, and let $G'_1$ be the graph obtained from
$G_1$ by joining a new vertex $v_2$ to $v$ with an edge.

We assert that each spanning tree of $G_2$ (resp. $G'_1$) has a perfect matching.
Let $T'_1$ be any spanning tree of $G'_1$ and $T_2$ be a spanning
tree of $G_2$. Then $(T_1'-v_2)\cup T_2$ is a spanning tree of $G$
and thus has a perfect matching $M$. Since the order of $T_2$ is
even, $v$ is matched with a vertex in $V(T_2)$ under $M$. Thus $M\cap E(T_2)$ is a perfect matching of $G_2$ and
$M\cap E(T_1'-v_2)$ is a perfect matching of $T_1'-v-v_2$. The latter implies that
$(M\cap E(T_1'-v_2))\cup \{vv_2\}$ is a perfect matching of $T_1'$. So the assertion holds. Since $G_1'$ and $G_2$ each has fewer vertices than $G$,
by the induction hypothesis we have that $G'\in \mathcal{G}$ and $G_2\in \mathcal{G}$. This along with Lemma 2.4 imply that
$G\in \mathcal{G}$.

\vspace{3mm}{\bf Case 2.} $n_2=2$ for any separation $\{G_1, G_2\}$
of $G$.

In this case, we will show that $G\cong H\circ K_1$, where $H$ is a
nonseparable graph. Hence $G\in \cal G$. To this end, we have the following claim.

\vspace{3mm}\noindent{\bf Claim.}  For a cut vertex $v$ of $G$, \\
(i) $G-v$ has exactly two
components, one of which is a single vertex $u$;\\
(ii) For any $w\in N_G(v)$ other than $u$,
we have $d_G(w)\geq 2$ and $w$ is also a cut vertex of
$G$.

\begin{proof}Let $\{G_1, G_2\}$ be a separation of $G$ with $V(G_1)\cap
V(G_2)=\{v\}$. Since $n_2=2$,  $G-v$ has a single
vertex as one component. Moreover, since $G$ has a perfect matching, by Theorem \ref{tutter} we have that exactly one
component of $G-v$ is a single vertex $u$ and all other components
of $G-v$ are even. If $G-v$ has at least three components, we take
an even component $G'$ of $G-v$. Then $\{G[\{v\}\cup V(G')], G-V(G')\}$
is a separation of $G$ such that $G-V(G')$ has  an even order at least 4, contradicting the assumption of Case 2. This shows (1).

Now we show (2). If $w$ is not a cut vertex of $G$, then $G$ has a spanning
tree $T$ in which both $u$ and $w$ are leaves adjacent to $v$. It is
clear that $c_o(T-v)\geq 2$. By Theorem \ref{tutter}, $T$ has no perfect
matching, a contradiction. This proves the Claim.
\end{proof}

Let $H$ be the graph obtained
from $G$ by deleting all  vertices of degree one. Hence $H$ is connected. Note that $G$ has cut vertices,
and by Claim (i) each cut vertex $v$ of $G$ has degree at least two and is adjacent to a vertex $u$ of degree one. Claim (ii) implies that each vertex of $N_G(v)\setminus\{u\}$ is a cut vertex of degree at least two in $G$. Hence $N_G(v)\setminus\{u\}\subseteq V(H)$, and $H$ has at least two vertices. Since $H$ is connected, it follows that each vertex  of $G$ with
degree at least two is a cut vertex.
That is,  each vertex $v$ of $H$ is a cut vertex of $G$.  By  Claim (i), we have that $H-v$ is connected and $v$ is adjacent to one vertex of degree one in $G$.  The former shows that $H$ is nonseparable, and the latter implies that $G=H[F_1, \ldots, F_p]\in \mathcal{G}$,
where $p\geq 2$ is the order of $H$ and $F_i\cong K_2$ for each $i$. In other words, $G\cong H\circ K_1$. The proof is completed. \hfill$\Box$

\section{Concluding remarks}

In this note, we characterize the graphs without an anti-kekule set, which are exactly those graphs
whose each spanning tree has a perfect matching.

Note that every tree with a perfect matching belong to
$\mathcal{G}$. It is well known (see, page 80 \cite{Bondy}) that a
tree has a perfect matching if and only if $c_o(T-v)=1$ for all
vertex $v\in V(T)$. Indeed, it is not hard to show that the family
$\mathcal{T}$ of trees with a perfect matching can be recursively
constructed by the following way:

(1) $K_2\in \mathcal{T}$,

(2) if $T_i\in \mathcal{T}$ for $1\leq i\leq 2$ with $V(T_1)\cap
V(T_2)=\emptyset$, then $T\in \mathcal{T}$, where $T$ is obtained
from $T_1$ and $T_2$ by joining a vertex of $T_1$ to that of $T_2$.

\vspace{3mm} It is natural to ask that what is largest size of a
graph of order $2n$ whose each spanning tree has a perfect matching? The answer is clear for $n\leq 3$.
It is easy to verify that $K_2$ and $C_4$ has the largest size for $n=1$ and $n=2$, respectively.
If $n=3$, there are exactly two graphs, i.e., $C_6$ and $K_3\circ K_1$, with the desired property.
In general, we have the following.

\begin{corollary}
If $G$ is a connected graph of order $2n$ for a positive integer
$n$ and size $m$ whose each spanning tree has a perfect
matching, then $m\leq f(n)$, where
$$
f(n)= \left \{
\begin{array}{ll}
1, & \mbox{if $n=1$, \ }\\
4, & \mbox{if $n=2$, \ }\\
\frac {(n+1)n} 2, &\mbox{if $n\geq 3$, \ }
\end{array}
\right.
$$
with equality if and only if
$$
G\cong \left \{
\begin{array}{ll}
K_2, & \mbox{if $n=1$, \ }\\
C_4, & \mbox{if $n=2$, \ }\\
C_6\ or\ K_3\circ K_1, &\mbox{if $n=3$, \ }\\
K_n\circ K_1, &\mbox{if $n\geq 4$. \ }\\
\end{array}
\right.
$$

\end{corollary}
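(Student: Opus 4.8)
The plan is to apply Theorem 1.1 to reduce the hypothesis to the structural statement $G\in\mathcal{G}$, and then run a strong induction on $n$ using the recursive description of $\mathcal{G}$. The cases $n\le 3$ I would settle by directly listing the members of $\mathcal{G}$ of orders $2$, $4$ and $6$ (a finite check) and reading off the extremal graphs; these also supply, for the inductive step, the bound $e(F)\le f(m)$ for every $F\in\mathcal{G}$ of order $2m$ with $m<n$. For $n\ge 4$: if $G\cong K_2$ the order is wrong, and if $G$ is an even cycle then $e(G)=2n<\frac{(n+1)n}{2}$ because $n>3$, so such a $G$ is not extremal; otherwise, by the definition of $\mathcal{G}$, $G\cong H[F_1,\dots,F_p]$ with $H$ connected of order $p\ge 2$ and $F_i\in\mathcal{G}$ of order $2n_i$, where $n_i\ge 1$ and $\sum_{i=1}^{p}n_i=n$. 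Since $p\ge2$ forces $n_i\le n-1$ for each $i$, the induction hypothesis gives $e(F_i)\le f(n_i)$; and as the edge sets of $H,F_1,\dots,F_p$ are pairwise disjoint in this construction,
\[
e(G)=e(H)+\sum_{i=1}^{p}e(F_i)\ \le\ \binom{p}{2}+\sum_{i=1}^{p}f(n_i).
\]

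The core of the proof is then an arithmetic inequality. Writing $\phi(k)=\frac{k(k+1)}{2}$, one has $f(k)=\phi(k)$ for $k\ne 2$ and $f(2)=\phi(2)+1$, together with the identity $\phi(n)-\sum_{i=1}^{p}\phi(n_i)=\frac12\big[(\sum_i n_i)^2-\sum_i n_i^2\big]=\sum_{i<j}n_in_j$. Hence, putting $q:=|\{i:n_i=2\}|$, the displayed bound yields $e(G)\le\phi(n)=f(n)$ as soon as
\[
\binom{p}{2}+q\ \le\ \sum_{1\le i<j\le p}n_in_j .
\]
Since $\sum_{i<j}n_in_j\ge\binom{p}{2}$ trivially, it suffices to show $\sum_{i<j}(n_in_j-1)\ge q$. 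This is clear for $q=0$; for $q\ge1$, relabel so that $n_1=\dots=n_q=2$, and observe that the pairs inside $\{1,\dots,q\}$ contribute $3\binom{q}{2}$ and the pairs between $\{1,\dots,q\}$ and $\{q+1,\dots,p\}$ contribute at least $q(p-q)$, so $\sum_{i<j}(n_in_j-1)\ge 3\binom{q}{2}+q(p-q)\ge q$, the last step because $p\ge2$ gives $2p+q\ge5$. For the equality analysis, $e(G)=\frac{(n+1)n}{2}$ forces equality throughout, in particular $q=0$ and $\sum_{i<j}n_in_j=\binom{p}{2}$, hence every $n_i=1$ and $p=n$; then $e(H)=\binom{n}{2}$ forces $H\cong K_n$ and $e(F_i)=f(1)=1$ forces each $F_i\cong K_2$, so $G\cong K_n\circ K_1$. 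Conversely $K_n\circ K_1\in\mathcal{G}$ and has $\binom{n}{2}+n=\frac{(n+1)n}{2}$ edges, completing the induction.

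The step I expect to require the most care is the bookkeeping around the single anomalous value $f(2)=4>\phi(2)$: it is the reason $f$ is not the clean quadratic $\phi$, and the reason the correction term $q$ must be carried through the arithmetic inequality. Concretely, for $n\ge4$ one must verify that no configuration containing a part $n_i=2$ ever meets the bound, i.e.\ that $3\binom{q}{2}+q(p-q)>q$ for every admissible pair $(p,q)$ in this range; the one borderline pair $p=2$, $q=1$ has to be treated separately by noting that the unique cross term then equals $2n-5$, which exceeds $q=1$ once $n\ge4$. Beyond this, the enumeration of the small cases $n\le3$ and the two invocations of the induction hypothesis are routine.
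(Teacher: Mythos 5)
Your argument is correct where the real content lies ($n\ge 4$), but it is a genuinely different route from the paper's. The paper does not induct: it fixes an edge-maximal $G$ of order $2n$, rules out nonseparable $G$ by Lemma 2.3, writes $G=H[F_1,\dots,F_p]$ via Theorem 1.1, forces $H$ to be complete by maximality, and then makes one local exchange --- a part $F_p$ of order $n_p\ge 4$ is replaced by $a=n_p/2$ pendant $K_2$'s attached to $(H-u_p)\vee K_a$ --- computing $m'=m+(a-1)(p-1)+\frac{(a+1)a}{2}-e(F_p)$ and contradicting maximality unless $p=2$ and $F_p\cong C_4$, which would force $n=3$. You instead run a global strong induction using $e(G)=e(H)+\sum_i e(F_i)\le\binom{p}{2}+\sum_i f(n_i)$, the identity $\frac{(n+1)n}{2}-\sum_i\frac{(n_i+1)n_i}{2}=\sum_{i<j}n_in_j$, and the correction term $q$ for parts with $n_i=2$, with the borderline case $(p,q)=(2,1)$ handled separately. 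The approaches are close in spirit (both rest on Theorem 1.1 and the disjointness of $E(H)$ and the $E(F_i)$), but yours is the more complete one: the paper's assertion that $(a-1)(p-1)+\frac{(a+1)a}{2}-e(F_p)\ge 0$ silently uses the bound $e(F_p)\le f(n_p/2)$ for smaller members of $\mathcal{G}$, i.e.\ precisely the induction hypothesis you make explicit, whereas the paper's exchange buys brevity and a direct picture of why a large part $F_i$ is never optimal. One caveat about the deferred finite check: at $n=3$ the enumeration of order-$6$ members of $\mathcal{G}$ produces a third extremal graph, $K_2[K_2,C_4]$ (a $C_4$ with a pendant path of length two; $6$ vertices, $6$ edges, and all four of its spanning trees have perfect matchings), so the stated equality list for $n=3$ (and the paper's remark preceding the corollary) is incomplete; this does not affect your induction for $n\ge 4$, since there you use only the inequality $e(F)\le f(\cdot)$ from smaller orders, but you should not expect the $n=3$ equality case to survive the check as stated.
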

\begin{proof} Assume that $G$ is a connected graph of order $2n$ and size $m$ whose each spanning tree has a perfect
matching. By the observation before this corollary, the result holds for $n\leq 3$. Next, we further assume
that $n\geq 4$ and $m$ is as large as possible. We shall show that $G\cong K_n\circ K_1$.

If $G$ is nonseparable, then by Lemma 2.3,
$$
m= \left \{
\begin{array}{ll}
1, & \mbox{if $n=1$, \ }\\
2n, &\mbox{if $n\geq 2$. \ }
\end{array}
\right.
$$
But, $m<\frac {(n+1)n} 2$ for any $n\geq 4$, contradicting our choice. So, $G$ must be separable.
By Theorem 1.1, there
exists a connected graph $H$ of order $p\geq 2$ with vertex set
$\{u_1, \ldots, u_p\}$ and $F_i\in \mathcal{G}$ for each $i\in\{1,
\ldots, p\}$, where $H, F_1, \ldots, F_p$ are pairwise
vertex-disjoint such that $G=H[F_1, \ldots, F_p]$. By the maximality of $G$, $H$ must be complete.
It remains to show that $F_i\cong K_2$ for all $i$. Toward a contradiction, suppose that $n_p\geq 4$, without loss of generality.


Let $H'=(H-u_p) \vee K_a$, where $a=\frac {n_p} 2$ and $G'=H'[F_1', \ldots, F'_{p+a-1}]$, where
$$
F_i'= \left \{
\begin{array}{ll}
F_i, & \mbox{if $1\leq i\leq p-1$, \ }\\
K_2, &\mbox{if $p\leq i\leq p+a-1$. \ }
\end{array}
\right.
$$
Then
\begin{eqnarray*}
m'&=&e(H')+\sum_{i=1}^{p+a-1}e(F_i')\\
  &=& e(H)-d_H(u_p)+a(p-1)+\frac {(a+1)a} 2+\sum_{i=1}^{p-1}e(F_i)\\
  &=& e(H)+(a-1)(p-1)+\frac {(a+1)a} 2+\sum_{i=1}^{p-1}e(F_i)\\
  &=& m+(a-1)(p-1)+\frac {(a+1)a} 2-e(F_p).\\
\end{eqnarray*}
Note that $(a-1)(p-1)+\frac {(a+1)a} 2-e(F_p)\geq 0$, with equality if and only if $p=2$ and $F_p\cong C_4$. 
Again by the maximality of $G$, $m'=m$. Thus $p=2$, $F_p\cong C_4$, and $F_i=K_2$ for each $i\leq p-1$, otherwise, by repeating the argument above, we obtain a graph $G''\in \mathcal{G}$ with size greater than that of $G$. It follows that $n=3$, contradicting the assumption that $n\geq 4$. So, $F_i\cong K_2$ for all $i$, together with $H\cong K_n$, we conclude that $G\cong K_n\circ K_1$.

The proof is completed.  

\end{proof}

\vspace{4mm} \noindent{\bf Acknowledgements}

\vspace{2mm} The authors are grateful to Xiaodong Liang, Wei Xiong,
Xinhui An for their careful helpful comments.

\end{document}